\definecolor{todo}{rgb}{1,0,0}
\newtheorem{thm}{Theorem}[section]
\newtheorem*{thm*}{Theorem}
\newtheorem{cor}[thm]{Corollary}
\newtheorem{prop}[thm]{Proposition}
\theoremstyle{definition}
\newtheorem{defn}[thm]{Definition}
\newtheorem{defns}[thm]{Definitions}
\newtheorem{con}[thm]{Construction}
\newtheorem{ex}[thm]{Example}
\theoremstyle{remark}
\newtheorem{rmk}[thm]{Remark}
\let\c@equation\c@thm
\let\oldmarginpar\marginpar
\renewcommand\marginpar[1]{\-\oldmarginpar[\raggedleft\footnotesize #1]%
{\raggedright\footnotesize #1}}
\newcommand{\Fun}{\mathrm{Fun}}
\newcommand{\Sp}{\mathrm{Sp}}
\newcommand{\End}{\mathrm{End}}
\newcommand{\Hom}{\mathrm{Hom}}
\newcommand{\colim}{\mathrm{colim}}
\newcommand{\fib}{\mathrm{fib}}
\newcommand{\tfib}{\mathrm{tfib}}
\newcommand{\cC}{\mathcal{C}}
\newcommand{\cR}{\mathcal{R}}
\newcommand{\Z}{\mathbb{Z}}
\newcommand{\N}{\mathbb{N}}
\newcommand{\id}{\mathrm{id}}
\newcommand*{\doublerightarrow}[2]{\mathrel{
  \settowidth{\@tempdima}{$\scriptstyle#1$}
  \settowidth{\@tempdimb}{$\scriptstyle#2$}
  \ifdim\@tempdimb>\@tempdima \@tempdima=\@tempdimb\fi
  \mathop{\vcenter{
    \offinterlineskip\ialign{\hbox to\dimexpr\@tempdima+1em{##}\cr
    \rightarrowfill\cr\noalign{\kern.5ex}
    \rightarrowfill\cr}}}\limits^{\!#1}_{\!#2}}}
\newcommand*{\triplerightarrow}[1]{\mathrel{
  \settowidth{\@tempdima}{$\scriptstyle#1$}
  \mathop{\vcenter{
    \offinterlineskip\ialign{\hbox to\dimexpr\@tempdima+1em{##}\cr
    \rightarrowfill\cr\noalign{\kern.5ex}
    \rightarrowfill\cr\noalign{\kern.5ex}
    \rightarrowfill\cr}}}\limits^{\!#1}}}
\newcommand{\cF}{\mathcal F}
\newcommand{\cG}{\mathcal G}
\newcommand{\cP}{\mathcal P}
\newcommand{\cQ}{\mathcal Q}
\newcommand{\NE}{_{\neq\varnothing}}
\newcommand{\Q}{\mathbb Q}
\newcommand{\cX}{\mathcal{X}}
\newcommand{\cY}{\mathcal{Y}}
\newcommand{\cZ}{\mathcal{Z}}
\newcommand{\inz}{\mathrm{in}_{\varnothing}}
\newcommand{\Spd}[2]{{\Sp}_{#1}^{#2}}
\let\SK@label\label\fi
 \let\your@thm\@thm
 \def\@thm#1#2#3{\gdef\currthmtype{#3}\your@thm{#1}{#2}{#3}}
 \def\mylabel#1{{\let\your@currentlabel\@currentlabel\def\@currentlabel
  {\currthmtype~\your@currentlabel}
 \SK@label{#1@}}\label{#1}}
 \def\myref#1{\ref{#1@}}
\title{Chromatic fracture cubes}
\author{Omar Antol\'{\i}n-Camarena}
\address{Department of Mathematics, Harvard University,
Cambridge, MA \ 02138}
\email{oantolin@math.harvard.edu}
\author{Tobias Barthel}
\address{Max Planck Institute for Mathematics, Bonn, Germany}
\email{tbarthel@mpim-bonn.mpg.de}
\date{\today}
\begin{document}

\begin{abstract}
In this note, we construct a general form of the chromatic fracture cube, using a convenient characterization of the total homotopy fiber, and deduce a decomposition of the $E(n)$-local stable homotopy category.
\end{abstract}

\maketitle

\tableofcontents

\section{Introduction}

The chromatic fracture square can be interpreted as the chromatic analogue of the arithmetic pullback square
\[\xymatrix{\Z \ar[r] \ar[d] & \prod_p \Z_p \ar[d] \\
\Q \ar[r] & \Q \otimes \prod_p \Z_p}\]
and thus expresses a fundamental local-to-global principle in stable homotopy theory. As such, it has been repeatedly used to first study problems in an appropriate local context and to then reassemble the results. Most noticeably, this approach is used in the construction and study of the spectrum of topological modular forms, see \cite{behrenstmfnotes}.

The goal of this note is to construct a higher dimensional version of the chromatic fracture square, well-known to the experts, which allows to explicitly decompose the $E(n)$-localization of a spectrum into its $K(t)$-local pieces for $0\le t \le n$. Moreover, we obtain a decomposition of the $E(n)$-local category into diagram categories of $K(t)$-local categories, for $0\le t \le n$. In fact, we work in a slightly more abstract setting, see \myref{chromaticcube}, \myref{cubecat}, and \myref{decomposition} for precise statements of our results. 

The proof uses the characterization of the total homotopy fiber of an $n$-cube as a right adjoint; since we do not know of a published reference for this fact, we include the argument. This then allows to easily deduce the iterative construction of the total homotopy fiber, as can be found for instance in \cite{goodwillie2}. 

\subsection*{Acknowledgments} 

We thank Rune Haugseng for a helpful conversation about coCartesian fibrations. The second author would also like to thank the Max Planck Institute for Mathematics for its hospitality.

\section{Cubical homotopy preliminaries}

Let $\cC$ be a pointed $\infty$-category with finite limits, and let $\Fun(\cP(T),\cC)$ be the category of $n$-cubes in $\cC$. Here, $T$ is a fixed set with $n$ elements and $\cP(T)$ is its poset of subsets, ordered by inclusion. We will also use $\cP\NE(T)$ for the poset of non-empty subsets of $T$. If $\cF \in \Fun(\cP(T),\cC)$, then the total homotopy fiber $\tfib(\cF)$ of $\cF$ is the fiber of the natural map $f\colon \cF_{\varnothing} \to \lim \left( \cF|_{\cP\NE(T)} \right) = \lim_{S \ne \varnothing} \cF(S)$. Recall that $\cF$ is said to be Cartesian if $f$ is an equivalence; this implies that $\tfib(\cF)$ is contractible. If $\cC$ is stable the converse is true: $\tfib(\cF)$ being contractible implies that $\cF$ is Cartesian. 

\begin{prop}
The total (homotopy) fiber of a cubical diagram is right adjoint to the functor
\[\xymatrix{\cC \ar[r]^-{\inz} &\Fun(\cP(T),\cC)}\]
sending an element $X \in \cC$ to the $n$-cube with initial vertex $X$ and the terminal object $\ast$ elsewhere. 
\end{prop}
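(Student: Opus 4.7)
The plan is to factor $\inz$ as a composite of two functors whose right adjoints I can identify separately. Let $\pi \co \cP(T) \to \Delta^1$ be the functor sending $\varnothing$ to $0$ and every nonempty $S \subseteq T$ to $1$, and let $c \co \cC \to \Fun(\Delta^1, \cC)$ be the functor $X \mapsto (X \to \ast)$ to the zero object. A direct comparison of the two diagrams yields a natural equivalence
\[\inz \we \pi^* \circ c,\]
since $\pi^*(X \to \ast)$ is a diagram on $\cP(T)$ with value $X$ at $\varnothing$, value $\ast$ at every nonempty subset, the map $X \to \ast$ along every edge $\varnothing \subset S$, and the identity on $\ast$ along every other edge --- that is, precisely $\inz(X)$.

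Next, I would identify the right adjoint of each constituent functor. For $c$, unwinding the data of a morphism $(X \to \ast) \to (A \to B)$ in $\Fun(\Delta^1, \cC)$ exhibits it as a map $X \to A$ together with a nullhomotopy of its composite with $A \to B$, i.e., a point of $\Map_\cC(X, \fib(A \to B))$. Hence the right adjoint of $c$ is the fiber functor $\fib \co \Fun(\Delta^1, \cC) \to \cC$. For $\pi^*$, the right adjoint is the right Kan extension $\pi_*$, which exists because $\cC$ has finite limits and $\cP(T)$ is finite. The pointwise formula, together with the fact that $\varnothing$ is the initial object of $\cP(T)$, computes
\[(\pi_* \cF)(0) \we \lim_{S \in \cP(T)} \cF(S) \we \cF(\varnothing), \qquad (\pi_* \cF)(1) \we \lim_{S \ne \varnothing} \cF(S),\]
with structure map the canonical comparison $\cF(\varnothing) \to \lim_{S \ne \varnothing} \cF(S)$.

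Composing the two adjunctions, the right adjoint of $\inz \we \pi^* \circ c$ is $\fib \circ \pi_*$, which sends $\cF$ to $\fib\bigl(\cF(\varnothing) \to \lim_{S \ne \varnothing} \cF(S)\bigr) = \tfib(\cF)$, as desired. The main subtlety is in making the factorization $\inz \we \pi^* \circ c$ precise: on objects it is transparent, but as a natural equivalence of $\infty$-functors it rests on the essential uniqueness of the constant diagram at the zero object, which follows from the contractibility of the mapping spaces into (and out of) $\ast$. Once the factorization is established, the remainder is formal, using only the existence and pointwise description of right Kan extensions.
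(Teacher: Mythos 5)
Your proof is correct, but it takes a genuinely different route from the paper's. The paper argues directly on mapping spaces: it extends $\cF$ to a limit cone $\tilde\cF$ over $\cF|_{\cP\NE(T)}$, writes $\Hom(X,\tfib(\cF))$ as the fiber of $\Hom(\inz(X),\cF) \to \Hom(\inz(X),\tilde\cF)$, and then shows the latter space is contractible by identifying maps $\inz(X) \to \tilde\cF$ with certain $(n+1)$-cubes that are determined, up to contractible choice, by restrictions whose space is itself contractible. You instead factor the left adjoint as $\inz \we \pi^* \circ c$ through the arrow category $\Fun(\Delta^1,\cC)$ and assemble the right adjoint as a composite of two standard ones: right Kan extension along $\pi$, whose pointwise formula (the comma category over $0$ is all of $\cP(T)$, with initial object $\varnothing$, and over $1$ it is $\cP\NE(T)$) produces exactly the comparison map $\cF(\varnothing) \to \lim_{S\ne\varnothing}\cF(S)$, followed by the fiber functor, which is right adjoint to $X \mapsto (X\to\ast)$ in a pointed $\infty$-category with finite limits. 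Both arguments are complete. Yours is the more formal and arguably cleaner one: it avoids the cone-extension and the $(n+1)$-cube contractibility analysis entirely, and its strategy of decomposing $\inz$ and dualizing is exactly the one the paper itself exploits afterwards to prove the iterated-total-fiber corollary (there with the factorization through $\Fun(\cP(T\setminus T'),\Fun(\cP(T'),\cC))$ rather than through $\Fun(\Delta^1,\cC)$). The one genuine subtlety, promoting the objectwise identification $\inz(X) \we \pi^*(c(X))$ to a natural equivalence of functors, you correctly reduce to the essential uniqueness of diagrams taking terminal values.
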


\begin{proof}
Let $\tilde{\cF}$ be an $n$-cube that is a limit cone over the diagram $\cF\NE := \cF|_{\cP\NE(T)}$; note that there is a canonical map $\cF \to \tilde{\cF}$. We thus get for $X \in \cC$:
\begin{align*}
\Hom(X,\tfib(\cF)) & =  \Hom(\inz(X),\inz(\tfib(\cF))) \\
& =  \Hom(\inz(X), \fib(\cF \to \tilde{\cF})) \\
& =  \fib(\Hom(\inz(X),\cF) \to \Hom(\inz(X), \tilde{\cF})) \\
& =  \Hom(\inz(X),\cF),
\end{align*}
because, as we will show, $\Hom(\inz(X), \tilde{\cF})$ is contractible.

Indeed, letting $T_+:=\{\ast\} \cup T$, we can identify
$\Hom(\inz(X),\cF)$ with the space of $(n+1)$-cubes $\cG \in
\Fun(\cP(T_+), \cC)$, such that $\cG$ restricts to $\inz(X)$
on $\{S \subseteq T_+ : \ast \notin S\}$ and restricts to
$\tilde{\cF}$ on $\{S \subseteq T_+ : \ast \in S\}$. Now, since
$\tilde{\cF}$ is Cartesian, these $(n+1)$-cubes $\cG$ are determined
up to contractible choice by their restrictions to $\{S
\subseteq T_+ : S \neq \{\ast\} \}$; and the space of these possible restrictions is contractible by the presence of the terminal objects in $\inz(X)$.
\end{proof}

As a consequence of this proposition we obtain an easy proof of the fact that the total homotopy fiber can be computed by taking fibers in all the edges of a cube in a fixed direction and then taking total fibers of the resulting cube. More precisely, let $\cF \in \Fun(\cP(T),\cC)$ be an $n$-cube and $T' \subseteq T$. For any $S' \in \cP(T')$, we can consider the cube $\cF_{T\setminus T',S'} \in \Fun(\cP(T\setminus T'),\cC)$ given by $S \mapsto \cF(S \cup S')$. This yields a new cube $\tfib_{T'}(\cF) \in \Fun(\cP(T'),\cC)$ whose value on $S' \subseteq T'$ is $\tfib(\cF_{T\setminus T',S'})$.

\begin{cor}\mylabel{tfibdecomposition}
With notation as above, there is a natural equivalence $\tfib(\cF) = \tfib(\tfib_{T'}(\cF))$ for any $T' \subseteq T$
\end{cor}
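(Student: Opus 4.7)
The plan is to deduce this from the preceding proposition by a uniqueness-of-right-adjoints argument: I will exhibit $\tfib \circ \tfib_{T'}$ as a second right adjoint to the functor $\inz \colon \cC \to \Fun(\cP(T),\cC)$, and conclude by uniqueness that it agrees naturally with $\tfib$.

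First, I would exploit the canonical decomposition $\cP(T) \cong \cP(T') \times \cP(T\setminus T')$ to identify
\[ \Fun(\cP(T),\cC) \simeq \Fun(\cP(T'), \cD') \quad \text{with} \quad \cD' := \Fun(\cP(T\setminus T'),\cC). \]
Under this identification, I would check that $\inz$ (for the set $T$) factors as
\[ \cC \xrightarrow{\inz^{T'}} \Fun(\cP(T'),\cC) \xrightarrow{(\inz^{T\setminus T'})_*} \Fun(\cP(T'), \cD'), \]
where the second arrow is post-composition with $\inz^{T\setminus T'} \colon \cC \to \cD'$. This is an immediate unwinding of definitions: both functors send $X \in \cC$ to the $T$-cube whose value is $X$ at $\varnothing$ and the terminal object elsewhere (using that $\inz^{T\setminus T'}(\ast) \simeq \ast_{\cD'}$ is the constant cube at $\ast$).

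Next I would observe that, by the preceding proposition, $\inz^{T'}$ admits $\tfib^{T'}$ as a right adjoint, and pointwise application of the adjunction $\inz^{T\setminus T'} \dashv \tfib^{T\setminus T'}$ produces an adjunction $(\inz^{T\setminus T'})_* \dashv (\tfib^{T\setminus T'})_*$ between the functor categories over $\cP(T')$. Unwinding definitions, the right adjoint $(\tfib^{T\setminus T'})_*$ is exactly the functor $\tfib_{T'}$ from the statement: its value at $S' \in \cP(T')$ is $\tfib(\cF_{T\setminus T', S'})$. Composing right adjoints in the order opposite to the factorization above, the right adjoint of $\inz$ is $\tfib^{T'} \circ \tfib_{T'} = \tfib \circ \tfib_{T'}$, and uniqueness of right adjoints delivers the claimed natural equivalence.

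I do not expect a real obstacle here; the only care required is notational, namely distinguishing the three incarnations of $\inz$ (associated to $T$, $T'$, and $T\setminus T'$) and their pointwise extensions, and keeping the identification $\cP(T) \cong \cP(T') \times \cP(T\setminus T')$ consistent throughout.
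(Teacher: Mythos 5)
Your argument is correct and is essentially the paper's own proof: both factor $\inz$ through $\Fun(\cP(T'),\cC)$ and $\Fun(\cP(T\setminus T'),\cC)$ via the identification $\cP(T)\cong\cP(T')\times\cP(T\setminus T')$ and then compose the right adjoints, invoking uniqueness of adjoints. The only cosmetic difference is that the paper writes the second factor as $\inz$ taken in the category $\Fun(\cP(T'),\cC)$ rather than as the pointwise extension $(\inz^{T\setminus T'})_*$; these agree under currying, and their right adjoints agree because limits in functor categories are computed pointwise.
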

\begin{proof}
Observe that $\inz\colon\cC \to \Fun(\cP(T),\cC)$ can be decomposed as follows
\[\xymatrix{\cC \ar[r]^-{\inz} & \Fun(\cP(T'),\cC) \ar[r]^-{\inz} & \Fun(\cP(T\setminus T'),\Fun(\cP(T'),\cC)) \ar[r]^-{\sim} & \Fun(\cP(T),\cC),}\]
so the same is true for the right adjoint $\tfib$. 
\end{proof}

\begin{cor}\mylabel{fibcart}
If $\cC$ is stable and $\cF \in \Fun(\cP(T),\cC)$ is a Cartesian $n$-cube and $T'=\{t\} \subseteq T$ a subset of size $1$, then the fiber of $(n-1)$-cubes $$\fib(\cF_{T \setminus \{t\}, \varnothing} \to \cF_{T \setminus \{t\}, \{t\}})$$ is also Cartesian.
\end{cor}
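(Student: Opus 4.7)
The plan is to apply Corollary \myref{tfibdecomposition} with the complementary subset $T \setminus \{t\}$ rather than $T' = \{t\}$ itself, since the resulting cube of total fibers $\tfib_{T\setminus\{t\}}(\cF)$ is exactly the $(n-1)$-cube of fibers in the statement.

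First I would unwind the definition: for $T'' := T \setminus \{t\}$ and $S'' \in \cP(T'')$, the cube $\cF_{\{t\}, S''} \in \Fun(\cP(\{t\}), \cC)$ is the $1$-cube (i.e.\ morphism) $\cF(S'') \to \cF(S'' \cup \{t\})$, whose total fiber is just the ordinary fiber. Therefore the $(n-1)$-cube $\tfib_{T\setminus\{t\}}(\cF) \in \Fun(\cP(T\setminus\{t\}),\cC)$ sends $S''$ to $\fib(\cF(S'') \to \cF(S'' \cup \{t\}))$, which is the value at $S''$ of the cube $\cG := \fib(\cF_{T\setminus\{t\},\varnothing} \to \cF_{T\setminus\{t\},\{t\}})$. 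So $\cG$ and $\tfib_{T\setminus\{t\}}(\cF)$ are identified as $(n-1)$-cubes.

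Next I would invoke Corollary \myref{tfibdecomposition} with $T' = T \setminus \{t\}$ to conclude
\[\tfib(\cG) \;=\; \tfib\bigl(\tfib_{T\setminus\{t\}}(\cF)\bigr) \;=\; \tfib(\cF).\]
Since $\cF$ is Cartesian and $\cC$ is stable, $\tfib(\cF) \simeq \ast$; hence $\tfib(\cG) \simeq \ast$. Applying the stable-category converse noted in the preliminaries (a cube whose total fiber is contractible is Cartesian) yields that $\cG$ is Cartesian, as desired.

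There is no real obstacle here beyond correctly identifying the fiber $(n-1)$-cube $\cG$ with the cube $\tfib_{T\setminus\{t\}}(\cF)$ produced by Corollary \myref{tfibdecomposition}; once that identification is made, the conclusion is immediate from the hypothesis that $\cF$ is Cartesian together with the characterization of Cartesian cubes in stable $\infty$-categories via vanishing of the total fiber.
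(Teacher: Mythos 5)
Your proof is correct and is exactly the argument the paper intends: the paper's one-line proof cites \myref{tfibdecomposition} together with the stable characterization of Cartesian cubes via vanishing total fiber, and your application with $T' = T\setminus\{t\}$ (identifying $\tfib_{T\setminus\{t\}}(\cF)$ with the fiber $(n-1)$-cube) is the correct way to make that ``immediate'' deduction precise.
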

\begin{proof}
This follows immediately from \myref{tfibdecomposition} and the fact that a cube in a stable category is Cartesian if and only if its total homotopy fiber is contractible.
\end{proof}

In the same spirit, but moving away from total fibers, we give a
formula for inductively computing limits of partial $n$-cubes $\cG\colon
\cP\NE(T) \to \cC$.

\begin{prop}\mylabel{pcubelim}
  Let $\cG : \cP\NE(T) \to \cC$ be a partial cube in $\cC$. Let $t \in
  T$ be arbitrary and set $T' = T \setminus \{t\}$. Then there is a
  pullback square:
  \[\xymatrix{
    \lim_{S \in \cP\NE(T)} \cG(S) \ar[r] \ar[d] &
    \lim_{S \in \cP\NE(T')} \cG(S) \ar[d]\\
    \cG(\{t\}) \ar[r] &
    \lim_{S \in \cP\NE(T')} \cG(\{t\} \cup S) }\]
\end{prop}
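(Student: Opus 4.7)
The plan is to realize $\cP\NE(T)$ as an explicit pushout of posets and then transport it into a pullback of limits. Set $A := \cP\NE(T')$, $B := \{S \subseteq T : t \in S\}$, and define $u \colon A \to B$ by $u(S) = S \cup \{t\}$; note that $B$ has $\{t\}$ as initial object, and $u$ is fully faithful.

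First, I would verify the poset isomorphism
\[
\cP\NE(T) \;\cong\; (A \times [1]) \sqcup_A B,
\]
where the span is the inclusion $A \hookrightarrow A \times [1]$, $S \mapsto (S, 1)$, on the left and $u$ on the right. The isomorphism sends $(S,0) \mapsto S$ and $(S,1) \mapsto S \cup \{t\}$ on $A \times [1]$, and is the identity on $B$. Order preservation and bijectivity are a direct check: every element of $\cP\NE(T)$ either contains $t$ (and is hit by $B$) or does not (and is hit by $A \times \{0\}$), and the comparability conditions match on the nose. Since both maps in the span are injective on objects (hence monomorphisms on nerves), this is also a pushout in $\cat{Cat}_\infty$.

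Next, applying $\Fun(-,\cC)$, which sends colimits to limits, and then $\lim$, which commutes with all limits, yields
\[
\lim_{\cP\NE(T)} \cG \;\simeq\; \lim_{A \times [1]} \cG_L \;\times_{\; \lim_A (\cG \circ u) \;}\; \lim_B \cG_R,
\]
where $\cG_L, \cG_R$ are the evident restrictions of $\cG$. Three simplifications finish the argument: (i) the inclusion $A \times \{0\} \hookrightarrow A \times [1]$ is limit-cofinal, since the slice at any $(S,j)$ has terminal object $(S,0)$, giving $\lim_{A \times [1]} \cG_L \simeq \lim_{\cP\NE(T')} \cG$; (ii) $\{t\}$ is initial in $B$, so $\lim_B \cG_R \simeq \cG(\{t\})$; and (iii) $\lim_A (\cG \circ u) = \lim_{S \in \cP\NE(T')} \cG(\{t\} \cup S)$ on the nose. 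These assemble into the pullback square in the statement. The only substantial step is the pushout identification; the rest is routine application of standard $\infty$-categorical machinery.
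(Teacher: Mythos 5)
Your argument is correct in substance but follows a genuinely different route from the paper. The paper constructs an explicit map of posets $F\colon \cP\NE(\{a,b\}) \times \cP\NE(T') \to \cP\NE(T)$ and verifies by hand that $F$ is initial (contractibility of the comma posets $F \downarrow I$), after which the pullback square is read off as an iterated limit over the cospan shape $\cP\NE(\{a,b\})$. You instead decompose the indexing poset itself as a (homotopy) pushout $(A\times[1]) \sqcup_A B$ and invoke the Mayer--Vietoris principle that $\Fun(-,\cC)$ and $\lim$ convert such a decomposition into a pullback, then simplify the three corners by cofinality ($A\times\{0\}\hookrightarrow A\times[1]$ is initial) and by the initial object $\{t\}\in B$. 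Your route trades the comma-category computations for a covering argument on the nerve; the paper's route avoids any discussion of pushouts in $\mathrm{Cat}_\infty$ at the cost of the case analysis showing contractibility. Both are standard and of comparable length.

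One step of your write-up is too quick and should be repaired. The assertion that injectivity on objects makes the $1$-categorical pushout of posets ``also a pushout in $\mathrm{Cat}_\infty$'' conflates two things: monomorphisms of nerves guarantee that the pushout of \emph{simplicial sets} $N(A\times[1]) \sqcup_{N(A)} N(B)$ is a homotopy pushout, but the nerve of a pushout of posets is in general not the pushout of the nerves (already $[1]\sqcup_{\{\ast\}}[1] = [2]$ has nerve strictly larger than the glued spine, and for non-full inclusions the two need not even be equivalent). What saves you here is that the images of $A\times[1]$ and $B$ in $\cP\NE(T)$ are \emph{full} subposets (namely $\{S : S\cap T'\neq\varnothing\}$ and $\{S : t\in S\}$) whose intersection is full and isomorphic to $A$ via $S\mapsto S\cup\{t\}$, and every chain in $\cP\NE(T)$ lies entirely in one of the two pieces: a chain containing $\{t\}$ lies in $B$, and a chain avoiding $\{t\}$ lies in the first piece. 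Hence $N(\cP\NE(T))$ is literally the union of these two simplicial subsets along $N(A)$, which is the statement you need. With that verification added, checking ``comparability conditions on the nose'' for objects and edges alone is not enough in principle, but your proof goes through.
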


\begin{proof}
  Let $\cQ = \cP\NE(\{a,b\}) \times \cP\NE(T')$ and $F\colon \cQ \to
  \cP\NE(T)$ be the map of posets defined by $F(\{a\},I) = \{t\}$,
  $F(\{b\},I) = I$ and $F(\{a,b\},I) = \{t\} \cup I$. We will show
  that $F$ is (homotopy) initial. It suffices to check that for each
  $I \in \cP\NE(T)$, the comma category $F \downarrow I$ is
  contractible. In all cases the comma category is a subposet of
  $\cQ$:
  \begin{itemize}
  \item If $t \notin I$, $F \downarrow I$ is the subposet $\{
    (\{b\},J) \in \cQ : J \subseteq I \}$. This is contractible
    because it has a largest element, namely $(\{b\},I)$.
  \item If $t \in I$, $F \downarrow I$ is the subposet $\cR = \{(K,J)
    \in \cQ: K =\{a\} \textbf{ or } J \subseteq I\}$. We can write
    $\cR$ as a pushout of posets:
    \[\xymatrix{
      \{\{a\}\} \times \cP\NE(I) \ar[r] \ar[d] &
      \{\{a\}\} \times \cP\NE(T') \ar[d] \\
      \cP\NE(\{a,b\}) \times \cP\NE(I) \ar[r] &
      \cR} \]
    The geometric realizations of the three posets besides $\cR$ are
    cubes (of dimensions $|I|$, $n-1$ and $2+|I|$) and the top
    horizontal and left vertical maps realize to a face inclusion.
    This shows $\cR$ is contractible.
  \end{itemize}

  Since $F$ is initial, $\lim \cG$ can be computed as $\lim(\cG \circ
  F)$. This limit we compute as an iterated limit:
  $$\lim_{(K,J)\in\cQ} \cG(F(K,J)) = \lim_{K \in \cP\NE(\{a,b\})} \left(\lim_{J \in \cP\NE(T')} \cG(F(K,J))\right).$$

  To conclude we identify the three terms in the pullback with the ones
  in the statement of the proposition:

  \begin{enumerate}
  \item $\lim_{J \in \cP\NE(T')} \cG(F(\{a\}, J)) = \lim_{J \in
      \cP\NE(T')} \cG(\{1\}) \cong \cG(\{1\})$, since the indexing
    category $\cP\NE(T')$ is contractible.
  \item $\lim_{J \in \cP\NE(T')} \cG(F(\{b\}, J)) = \lim_{J \in
      \cP\NE(T')} \cG(J)$.
  \item $\lim_{J \in \cP\NE(T')} \cG(F(\{a,b\}, J)) = \lim_{J \in
      \cP\NE(T')} \cG(\{1\} \cup J)$.
  \end{enumerate}
\end{proof}

\section{The chromatic fracture cube}\label{sec:cfc}

Let $\Sp$ be the stable $\infty$-category of spectra \cite{HA}, and denote Bousfield localization \cite{Bousfieldlocalization} at a spectrum $E$ by $L_E$, with  associated category $\Spd{E}{}$ of $E$-local spectra. Two spectra $E$ and $F$ are said to be Bousfield equivalent if $L_E = L_F$; in this case, we write $\langle E \rangle = \langle F \rangle $. An endofunctor on $\Sp$ will be called $E$-local if it takes values in the category of $E$-local spectra. If we are given a collection $\{F(1), F(2), \ldots, F(n)\}$ of spectra, for any set $S = \{i_1,\ldots,i_k\}$ with $1 \le i_1 < i_2 < \cdots < i_k \le n$, we write $L_S$ for the composite $L_{F(i_1)} \ldots L_{F(i_k)}$. The general form of the chromatic fracture cube takes the following form, generalizing the fracture square in \cite{bauernotes}. 

\begin{con}\mylabel{cubecons}
  Suppose $\{F(1),\ldots,F(n)\}$ is any collection of spectra. We
  inductively define an $n$-cube $\cF : \cP(\{1,\ldots,n\})
  \to \End(\Sp)$, whose vertices will turn out to be given by $S
  \mapsto L_S$, as follows:
  \begin{itemize}
  \item If $n=1$, the cube is simply the natural localization morphism
    $\id \to L_{F(i)}$.
  \item For $n>1$, inductively construct the cube $\cF'$ on
    $\cP(\{2,\ldots,n\})$, and get the full $n$-cube $\cF$ as the
    morphism of $(n-1)$-cubes $\cF' \to L_{F(1)} \cF'$ given by the
    naturality of the localization morphisms $\id \to L_{F(1)}$.
  \end{itemize}
\end{con}

\begin{prop}\mylabel{chromaticcube}
  Suppose $\{F(1),\ldots,F(n)\}$ is a collection of spectra such that
  $L_{F(j)} L_{F(i)}=0$ for all $j>i$. If $\cF \in
  \Fun(\cP(\{1,\ldots,n\}),\End(\Sp))$ is the $n$-cube given
  by \myref{cubecons} then
  \[\lim_{S \ne \varnothing} \cF(S) = L_E,\]
  where $E$ is any spectrum Bousfield equivalent to $F(1) \oplus
  \ldots \oplus F(n)$.
\end{prop}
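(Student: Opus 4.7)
The plan is induction on $n$. The base case $n=1$ is immediate: the only non-empty subset $\{1\}$ contributes $\cF(\{1\}) = L_{F(1)}$, and $\langle F(1) \rangle = \langle E \rangle$. For the inductive step with $n \ge 2$, I will apply \myref{pcubelim} with $t = 1$ and $T' = \{2,\ldots,n\}$, which presents $\lim_{S \ne \varnothing} \cF(S)$ as a pullback of three terms that I plan to identify as follows. The top-right term $\lim_{S \in \cP\NE(T')} \cF(S)$ is computed by the inductive hypothesis applied to the sub-collection $\{F(2),\ldots,F(n)\}$ (whose vanishing hypothesis is inherited), yielding $L_{E'}$ for any $E'$ Bousfield equivalent to $F(2) \oplus \cdots \oplus F(n)$. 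The bottom-left term is $\cF(\{1\}) = L_{F(1)}$. The bottom-right term $\lim_{S \in \cP\NE(T')} L_{F(1)} L_S$ equals $L_{F(1)} L_{E'}$, obtained by commuting $L_{F(1)}$ past a finite limit, which is justified because left adjoints between stable $\infty$-categories are exact.

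This reduces the inductive step to recognising the resulting pullback of $L_{F(1)} X \to L_{F(1)} L_{E'} X \leftarrow L_{E'} X$ as $L_E X$, i.e.\ to the two-term chromatic fracture square for the pair $(F(1), E')$. The key input $L_{E'} L_{F(1)} = 0$ holds because $L_{F(1)} X$ is $F(i)$-acyclic for each $i \ge 2$ by the hypothesis $L_{F(i)} L_{F(1)} = 0$, and therefore $E'$-acyclic. Granting this, a standard argument identifies the pullback $P$ with $L_E X$: first, $P$ is $E$-local because for any $E$-acyclic $Y$ each of the three mapping spaces $\Hom(Y, L_{F(1)} X)$, $\Hom(Y, L_{E'} X)$, $\Hom(Y, L_{F(1)} L_{E'} X)$ vanishes; second, applying the exact functors $L_{F(1)}$ and $L_{E'}$ to the pullback square collapses it via, respectively, idempotence of $L_{F(1)}$ and $L_{E'} L_{F(1)} = 0$, yielding $L_{F(1)} P \simeq L_{F(1)} X$ and $L_{E'} P \simeq L_{E'} X$. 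Hence $X \to P$ is an $E$-equivalence and $P \simeq L_E X$.

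I expect the main hurdle to be bookkeeping rather than conceptual: one must carefully track the order of composition in $L_S = L_{F(i_1)} \cdots L_{F(i_k)}$ produced by \myref{cubecons}, and confirm that $L_{F(1)}$ really appears as the \emph{outermost} factor in the bottom-right corner of the pullback supplied by \myref{pcubelim}, since it is this outer occurrence that the exactness argument extracts from the finite limit.
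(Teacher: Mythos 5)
Your proof is correct, but it takes a genuinely different route from the paper's. The paper argues directly and uniformly in $i$: it forms the limit cone $\tilde{\cF}$ with apex $P=\lim_{S\ne\varnothing}\cF(S)$, applies $L_{F(i)}$ to the whole $n$-cube, observes that the vanishing hypothesis and the construction collapse every edge in the $\{i\}$-direction except the initial one, and then invokes the total-fiber formalism (\myref{fibcart}) to conclude that each leg $P\to L_{F(i)}$ is an $F(i)$-equivalence; combined with the $E$-locality of $P$ this gives $P\simeq L_E$. You instead induct on $n$, peeling off $F(1)$ via \myref{pcubelim} (which the paper proves but only deploys later, in the proof of \myref{decomposition}), and reduce to the classical two-term fracture square for the pair $(F(1),E')$ with $\langle E'\rangle=\langle F(2)\oplus\cdots\oplus F(n)\rangle$. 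Your identification of the three corners is right, and the point you flag --- that $L_{F(1)}$ sits outermost in $\cF(\{1\}\cup S)=L_{F(1)}L_S$ --- is exactly what \myref{cubecons} guarantees, so commuting the exact functor $L_{F(1)}$ past the finite limit is legitimate; the derivation of $L_{E'}L_{F(1)}=0$ from the hypothesis is also correct. Your route buys a reduction of the cube to an iterated application of the familiar $n=2$ square and avoids the total-fiber machinery of \myref{tfibdecomposition}; the paper's route avoids induction and treats all the $F(i)$ symmetrically in one pass. One small point to make explicit in a careful write-up: the inductive hypothesis should be carried in the slightly stronger form that the \emph{canonical} map $\id\to\lim_{S\in\cP\NE(\{2,\ldots,n\})}\cF(S)$ coming from the cone $\cF'$ is the $E'$-localization (not merely that the limit is abstractly equivalent to $L_{E'}$); this is what lets you recognize the two maps into the corner $L_{F(1)}L_{E'}$ as $\eta_{F(1)}$ and as $L_{F(1)}(\eta_{E'})$ respectively, which is precisely what your collapse-by-idempotence and collapse-by-vanishing steps use.
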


\begin{proof}
Let $P = \lim_{S \ne \varnothing} \cF(S)$ be the limit with legs $f_i\colon P \to L_{F(i)}$. The localization maps $\eta_i\colon L_E \to L_{F(i)}L_E = L_{F(i)}$ induce a natural map from $L_E$ to this limit,
\[\eta\colon L_E \longrightarrow P.\]
Since $F(i)$-locals are clearly $E$-local and locality is preserved under limits, $P$ is $E$-local. It therefore suffices to show that $\eta$ is an $F(i)$-equivalence for all $1 \le i \le n$. To this end, fix $i$ and consider the commutative triangle
\[\xymatrix{L_E \ar[r]^{\eta} \ar[rd]_{\eta_i} & P \ar[d]^{f_i} \\
& L_{F(i)}.}\]
Because $\eta_i$ is an $F(i)$-equivalence by definition, we only need to show that so is $f_i\colon P \to L_{F(i)}$. To this end, apply $L_{F_i}$ to the limit cube $\tilde{\cF}$ with $P$ as initial vertex, $\tilde{\cF}(S) = \cF(S)$ for $S\ne \varnothing$, and the natural maps. This yields a cube $L_{F_i}\tilde{\cF} \in \Fun(\cP(\{1,\ldots,n\}),\End(\Sp))$ with three properties:

\begin{itemize}
\item It is again Cartesian as $L_{F_i}$ is exact.
\item $L_{F(i)}\tilde{\cF} (S) = 0$ whenever $S$ contains an element
  smaller than $i$, because $L_{F(i)}L_{F(k)} = 0$ for $k<i$.
\item For $S \neq\varnothing$ and $i\notin S$, the edges from $S$ to
  $S \cup \{i\}$ are all equivalences since either both vertices are
  $0$ by the previous point or, if $\min(S)<i$, the edge $\cF(S) \to
  \cF(S \cup \{i\})$ of $\cF$ is equivalent to the localization map
  $\cF(S) \to L_{F(i)} \cF(S)$ by construction, so applying $L_{F(i)}$
  will yield an equivalence.
\end{itemize}

By \myref{fibcart}, taking fibers in the $\{i\}$ direction produces a Cartesian $(n-1)$-cube, which by the third item above is actually just $\inz(\fib(L_{F(i)}P \to L_{F(i)}))$. Therefore, $L_{F(i)}P \cong L_{F(i)}$ and the claim follows.
\end{proof}

\begin{ex}
The Morava $K$-theories $K(0),\ldots,K(n)$ satisfy the conditions of \myref{chromaticcube}, since $K(i) \otimes K(j) =0$ whenever $i\ne j$. In particular, if $i<j$, then there is a pullback square
\[\xymatrix{L_{K(i) \oplus K(j)} \ar[r] \ar[d] & L_{K(j)} \ar[d]\\
L_{K(i)} \ar[r] & L_{K(i)}L_{K(j)}.}\]
Similarly, for $F(1) = E(n-1)$ and $F(2) = K(n)$ we get the usual chromatic fracture square
\[\xymatrix{L_{E(n)} \ar[r] \ar[d] & L_{K(n)} \ar[d]\\
L_{E(n-1)} \ar[r] & L_{E(n-1)}L_{K(n)}}\]
using the identity of Bousfield classes $\langle E(n-1) \oplus K(n) \rangle = \langle E(n) \rangle$.
\end{ex}

\section{A description of the category of local objects}

As in the previous section, let $\{F(1),\ldots,F(n)\}$ be a collection
of spectra such that $L_{F(j)} L_{F(i)}=0$ for all $j>i$, fixed for
the remainder of the section, and let $T = \{1,\ldots,n\}$. We will
inductively construct a category $\cC_S$ of partial cubes $\cP\NE(S)$
for $S \subset T$, and prove that $\cC_T$ is equivalent to $\Sp_E$ for
any spectrum $E$ with the same Bousfield class as $F(1) \oplus \cdots
\oplus F(n)$.

\begin{con}\label{cubecatcons}
  For $\varnothing \neq S \subset T$ we let $\cC_S$ be the full
  subcategory of the diagram category $\Fun(\cP\NE(S), \Sp)$ spanned
  by certain partial cubes $\cG$ chosen as follows:
  \begin{itemize}
  \item If $S=\{i\}$ is a singleton, we take all $\cG$ such that the
    unique value of $\cG$, namely $\cG(S)$, is $F(i)$-local.
  \item If $S = \{i\} \cup S'$ where $i = \min(S) \notin S'$, we take
    all $\cG$ such that:
    \begin{enumerate}
    \item $\cG' := \cG|_{\cP\NE(S')}$ belongs to
      $\cC_{S'}$, and
    \item if we think of $\cG|_{\cP(S)\setminus\{\varnothing, \{i\}\}}$
      as a morphism between diagrams of shape $\cP\NE(S')$, namely $\cG'
      \to \cG'|_{\{U \subseteq S : i \in U\}}$, this morphism is the
      natural morphism $\cG' \to L_{F(i)} \cG'$.
    \end{enumerate}
  \end{itemize}
\end{con}

\begin{rmk}
  From the definitions it is clear that for any $\cG \in \cC_T$ we
  have that:
  \begin{itemize}
  \item For any $S \subseteq T$, $\cG(S)$ is $F(\min(S))$-local.
  \item For every $k$, $\cG|_{\{S \subseteq T : \max(S) = k\}}$ is just the
    cube from \myref{cubecons} applied to $\cG(\{k\})$.
  \end{itemize}
\end{rmk}

This construction becomes much clearer with an example.

\begin{ex}\mylabel{ex:cube}
  The $\infty$-category $\cC_{\{1,2,3\}}$ is equivalent to the full
  subcategory of $\Fun(\cP\NE(\{1,2,3\}),\Sp)$ on the diagrams of the form:
  \[\xymatrix{& & & X_3 \ar[ld]^{\eta_2} \ar[dd]^{\eta_1} \\
    X_2 \ar[rr]^{f_{23}} \ar[dd]_{\eta_1} & & L_{F(2)} X_3 \ar[dd]^(0.3){\eta_1} \\
    & X_1 \ar'[r][rr]^(0.3){f_{13}} \ar[ld]_{f_{12}} & & L_{F(1)}X_3 \ar[ld]^{L_{F(1)}\eta_2} \\
    L_{F(1)} X_2 \ar[rr]^{L_{F(1)} f_{23}} & & L_{F(1)}L_{F(2)} X_3}\]
  where $X_i$ is $F(i)$-local for $i=1,2,3$ and the morphisms labeled
  $\eta_i$ are the natural maps $Y \to L_{F(i)}Y$. Notice that the
  diagram is determined just by $X_1,X_2,X_3$, $f_{12},f_{13},f_{23}$
  and a homotopy showing the bottom square commutes.

  As a simpler example, the $\infty$-category $\cC_{\{1,2\}}$ is the
  category of cospans of the form of the left face of the above
  partial cube. Those cospans are determined by $X_1,X_2$ and
  $f_{12}$.
\end{ex}

\begin{thm}\mylabel{cubecat}
  If $E$ is any spectrum Bousfield equivalent to $F(1) \oplus \cdots
  \oplus F(n)$, the $\infty$-category $\cC_T$ constructed above is
  equivalent to $\Sp_E$. Moreover, the following functors are
  mutually inverse equivalences:
  \begin{itemize}
  \item $\lim\colon \cC_T \to \Sp_E$ given by $\cG \mapsto \lim_{S \neq
      \varnothing} \cG(S)$, and
  \item $\cF\NE\colon \Sp_E \to \cC_T$ given by $X \mapsto
    \cF[X]|_{\cP\NE(T)}$ where $\cF$ is the $n$-cube of functors from
    \myref{cubecons} and $\cF[X]$ denotes the cube obtained
    by applying those functors to $X$.
  \end{itemize}
\end{thm}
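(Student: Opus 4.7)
The plan is a direct proof, closely paralleling the argument for \myref{chromaticcube}. Both functors are manifestly well-defined: $\cF\NE$ lands in $\cC_T$ by comparison of the inductive constructions \myref{cubecons} and \myref{cubecatcons}, and $\lim$ lands in $\Sp_E$ because each $\cG(S)$ is $F(\min S)$-local by the Remark, hence $E$-local (since $E$-acyclics lie inside $F(\min S)$-acyclics), and $E$-local spectra are closed under limits.

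The composite $\lim \circ \cF\NE$ is naturally the identity by Proposition \myref{chromaticcube}: for $X \in \Sp_E$, $\lim_{S \ne \varnothing} \cF\NE(X)(S) = \lim_{S \ne \varnothing} L_S X = L_E X = X$. For the composite $\cF\NE \circ \lim$, set $X = \lim \cG$. Using the Remark, $\cG(S) = L_{S \setminus \{\max S\}} \cG(\{\max S\})$, so the limit leg $X \to \cG(\{\max S\})$ factors via the universal property of $L_{F(\max S)}$ (applicable since $\cG(\{\max S\})$ is $F(\max S)$-local); applying $L_{S \setminus \{\max S\}}$ yields a natural comparison $L_S X \to \cG(S)$, where $L_S X = \cF\NE(X)(S)$. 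Thus the theorem reduces to showing that $L_{F(i)} X \simeq \cG(\{i\})$ naturally for each $i \in T$.

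To prove this core equivalence, I would apply $L_{F(i)}$ to the Cartesian cube $\tilde{\cG}$ with initial vertex $X$ and $\cG$ elsewhere; the result remains Cartesian by exactness of $L_{F(i)}$. Two facts then organize the argument: (a) $L_{F(i)} \cG(S) = 0$ whenever $\min S < i$, because $\cG(S)$ begins with $L_{F(\min S)}$ and $L_{F(i)} L_{F(\min S)} = 0$ by hypothesis; and (b) every direction-$i$ edge $\cG(S) \to \cG(S \cup \{i\})$ with $S \ne \varnothing$ becomes an equivalence after $L_{F(i)}$, either because both endpoints vanish (when $\min S < i$) or because the edge is itself the localization map at $F(i)$ (when $\min S > i$). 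Taking fibers in the $\{i\}$-direction via \myref{tfibdecomposition} and \myref{fibcart} yields the Cartesian cube $\inz(\fib(L_{F(i)} X \to \cG(\{i\})))$, which forces the fiber to vanish. The main subtlety is the case $\min S > i$ in (b): the edge is identified with the localization map at $F(i)$ by unwinding the recursive definition of $\cC_T$, peeling off the successive minima $1, 2, \ldots, i - 1$ until the edge appears at the outermost ``natural morphism $\cG' \to L_{F(i)} \cG'$'' of the subcategory $\cC_{\{i, i+1, \ldots, n\}}$.
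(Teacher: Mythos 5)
Your argument for well-definedness, for $\lim \circ \cF\NE \simeq \id$ via \myref{chromaticcube}, and for the key computation $L_{F(i)}(\lim\cG) \simeq \cG(\{i\})$ --- applying $L_{F(i)}$ to the Cartesian extension $\tilde\cG$, the three-case analysis of the direction-$i$ edges, and \myref{fibcart} --- all match the paper's proof, including the correct identification of the case $\min(S)>i$ as the point where the recursive definition of $\cC_T$ must be unwound.

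The gap is in the reduction step that precedes this. Producing, for each vertex $S$, a comparison map $L_S X \to \cG(S)$ by factoring the limit leg through $L_{F(\max S)}X$ and then applying $L_{S\setminus\{\max S\}}$ does not by itself produce a morphism of diagrams $\cF\NE[X] \to \cG$ in $\Fun(\cP\NE(T),\Sp)$: one must also check, coherently, compatibility with all the edges of $\cG$. This matters because an object $\cG \in \cC_T$ carries edges that are genuinely extra data not determined by its vertices --- in \myref{ex:cube} these are $f_{12}$, $f_{13}$, $f_{23}$ and the homotopy filling the bottom square; the constraints of Construction \ref{cubecatcons} only pin down the edges $S \to S\cup\{i\}$ with $i<\min(S)$. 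So even after establishing $L_{F(i)}X\simeq\cG(\{i\})$ for every $i$, you have matched the vertices (and some edges) but not the diagrams; your phrase ``yields a natural comparison'' asserts rather than proves the assembly. The paper flags precisely this point (``we have not shown for example that the map $\cG(\{1\}) \to \cG(\{1,i\})$ is $L_{F(1)}(P \to \cG(\{i\}))$'') and closes it with a further inductive argument: taking fibers of $\tilde\cG$ in the $\{1\}$-direction and using that the resulting $(n-1)$-cube is $L_{F(1)}$-acyclic, it identifies $\tilde\cG$, viewed as a morphism of $(n-1)$-cubes $\tilde\cG|_{\{S\,:\,1\notin S\}} \to \cG|_{\{S\,:\,1\in S\}}$, with the $L_{F(1)}$-localization of the entire top face; it then applies $L_{F(2)}$, notes the bottom face vanishes so the top face is again Cartesian, and recurses. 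You need some version of this global, face-by-face identification (or another device that compares the diagrams as diagrams rather than vertex by vertex) to complete the proof.
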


\begin{proof}
  First of all let us show those functors are well defined. This is
  clear for $\cF\NE$ by construction. For $\lim$ it is because all
  spectra in the image of a given partial cube $\cG$ are $F(i)$-local
  for some $i$ and thus also $E$-local.

  Now we will show the functors are mutually inverse. First, for any
  $E$-local spectrum $X$, the canonical map $\lim
  \left(\cF\NE[X]\right) \to X$ (coming from the diagram $\cF[X]$,
  which is a cone over $\cF\NE[X]$) is an equivalence by
  \myref{chromaticcube}.
  
  For the other composite, let $\cG \in \cC_T$ and let $P = \lim \cG$.
  We must show that $\cF\NE(P) \cong \cG$. Extend $\cG$ to a Cartesian
  cube $\tilde\cG$ with $\tilde\cG(\varnothing) = P$. First we show
  that $L_{F(i)}P \cong \cG(\{i\})$, and moreover, that the
  $\varnothing \to \{i\}$ edge in the $n$-cube $\tilde\cG$ is the
  localization map $P \to L_{F(i)}P$ under this equivalence.

  To this end, apply $L_{F(i)}$ to $\tilde\cG$. For any $i \notin S
  \subseteq T$ consider the edge of $L_{F(i)}\tilde\cG$ from $S$ to $S
  \cup \{i\}$. There are three cases:

  \begin{itemize}
  \item If $S = \varnothing$, the edge has the form $L_{F(i)} P \to
    L_{F(i)} \cG(\{i\})$. This is the morphism we wish to show is an
    equivalence.
  \item If $\min(S) < i$, we have $L_{F(i)} \cG(S) = L_{F(i)} \cG(S
    \cup \{i\}) = 0$, because both $\cG(S)$ and $\cG(S \cup \{i\})$
    are $F(\min(S))$-local. Therefore these edges are equivalences.
  \item If $\min(S) > i$, then the edge $\cG(S) \to \cG(\{i\} \cup S)$
    is the natural map $\cG(S) \to L_{F(i)} \cG(S)$ because this edge
    is contained in $\cG|_{\{U \subset T : \max(U)=\max(S)\}}$ which
    is a cube obtained from \myref{cubecons} applied
    to $\cG(\{\max(S)\})$. This edge, of course, becomes an
    equivalence after applying $L_{F(i)}$.
  \end{itemize}

  Now the cube $L_{F(i)}\tilde\cG$ is Cartesian because $\tilde\cG$
  was and $L_{F(i)}$ is exact. This means that taking fibers in the
  direction of $\{i\}$ must lead to a Cartesian $(n-1)$-cube. By the
  above case analysis, that $(n-1)$-cube is simply $\inz(\fib(L_{F(i)}
  P \to L_{F(i)} \cG(\{i\})))$, from which we conclude that
  $\fib(L_{F(i)} P \to L_{F(i)} \cG(\{i\}))$ is $0$ and thus $L_{F(i)}
  P \to L_{F(i)} \cG(\{i\})$ is an equivalence, as desired.
  
  At this point we are close to showing that $\tilde\cG$ and $\cF(P)$
  are equivalent $n$-cubes: we have shown that they have equivalent
  objects at all vertices and also many of the maps agree, but we
  have not shown for example that the map $\cG(\{1\}) \to \cG(\{1,i\})$
  is $L_{F(1)}(P \to \cG(\{i\}))$.
  
  To conclude, consider taking fibers of $\tilde\cG$ in the $\{1\}$
  direction to get an $(n-1)$-cube $\tilde\cG'$. From the argument
  above and the exactness of $L_{F(1)}$, we know that this
  $(n-1)$-cube vanishes if we apply $L_{F(1)}$ to it, so that the
  $n$-cube $\tilde\cG$ when thought of as a map of $(n-1)$-cubes
  $\tilde\cG|_{\{S : 1 \notin S\}} \to \cG|_{\{S : 1 \in S\}}$ is just
  $L_{F(1)}$-localization.

  Now applying $L_{F(2)}$ to $\tilde\cG$, we get a Cartesian $n$-cube
  whose ``bottom'' face, $L_{F(2)}\tilde\cG|_{\{S : 1\in S\}}$,
  vanishes. This means the top face, $L_{F(2)}\tilde\cG|_{\{S :
    1\notin S\}}$, is also Cartesian and we can recursively apply the
  argument of the previous paragraph to conclude that $\tilde\cG \cong
  \cF(P)$.
\end{proof}

\begin{rmk}
The special case $n=2$ of the previous result appears as Remark 7 in \cite[Lecture 23]{chromotopynotes}.
\end{rmk}

\section{A decomposition of the category of local objects}

As in the previous section, let $\{F(1),\ldots,F(n)\}$ be a collection
of spectra such that $L_{F(j)} L_{F(i)}=0$ for all $j>i$, and let $T =
\{1,\ldots,n\}$. In this section we will describe a partial $n$-cube
of $\infty$-categories whose limit is $\Sp_E$ where $E$ is Bousfield
equivalent to $F(1) \oplus \cdots \oplus F(n)$. To do that we will
need some combinatorial preliminaries; an illustration of the construction can be found below in \myref{ex:catcube}.

\begin{defns}
  For $\varnothing \neq S \subseteq S' \subseteq T$, define
  $\alpha(S)$, $\beta(S,S')$ and $\theta_{S,S'}$:
  \begin{align*}
    \alpha(S) & = \{U \subseteq T : S \subseteq U, \min(S) = \min(U)\} \\
    \beta(S,S') & = \{V \subseteq [\min(S'), \min(S)) : S' \cap
    [\min(S'), \min(S)) \subseteq V\} \\
    \theta_{S,S'}\colon \alpha(S') & \to \beta(S,S') \times
    \alpha(S) \\
    U & \mapsto (U \cap [\min(S'), \min(S)), U \cap [\min(S),n])
  \end{align*}
  where we have repurposed traditional interval notation to denote
  intervals of integers.
\end{defns}

We will regard $\alpha(S)$ and $\beta(S,S')$ as posets, ordering them
by inclusion, which makes $\theta_{S,S'}$ a map of posets. Notice that
$\alpha(S)$ and $\beta(S,S')$ are isomorphic to posets of all subsets
of some set, so that diagrams of shape $\alpha(S)$ or $\beta(S,S')$
are cubical diagrams.

\begin{con}\mylabel{decompcons}
  We construct a partial $n$-cube of $\infty$-categories $\cG\colon
  \cP\NE(T) \to \mathrm{Cat}_\infty$ as follows:
  \begin{itemize}
  \item The vertices are given by $\cG(S) =
    \Spd{F(\min(S))}{\alpha(S)} := \Fun(\alpha(S), \Sp_{F(\min(S))})$.
  \item For $S \subset S'$, the functor $\cG(S \subseteq S') : \cG(S)
    \to \cG(S')$ is given by the composite
    \begin{align*}
      \Fun(\alpha(S),\Sp_{F(\min(S))}) \xrightarrow{\iota \circ -} & \Fun(\alpha(S), \Sp) \\
      \xrightarrow{\cF|_{\beta(S,S')} \circ -} & \Fun(\alpha(S), \Fun(\beta(S,S'), \Sp_{F(\min(S'))})) \\
      \xrightarrow{\cong} & \Fun(\beta(S,S') \times \alpha(S), \Sp_{F(\min(S'))}) \\
      \xrightarrow{- \circ \theta} & \Fun(\alpha(S'), \Sp_{F(\min(S'))})
    \end{align*}
  \end{itemize}
  where $\iota\colon \Spd{F(\min(S))}{} \to \Sp$ denotes the natural inclusion functor and  $\cF$ is essentially the cube of functors from
  \myref{cubecons} restricted to $\beta(S,S') \subseteq \cP(T)$: this
  restriction is a functor $\beta(S,S') \to \End(\Sp)$, which we can think
  of instead as a functor $\Sp \to \Fun(\beta(S,S'), \Sp)$ and then
  replace $\Sp$ in the target by $\Sp_{F(\min(S'))}$ since all $V \in
  \beta(S,S')$ satisfy $\min(V) = \min(S')$.
\end{con}

\begin{rmk}
  Unwinding the definitions in Constructions \ref{decompcons} and
  \ref{cubecons}, we see that for, $\varnothing \neq S \subseteq S'$,
  the functor $\cG(S) \to \cG(S')$ sends a cube $X\colon \alpha(S) \to
  \Sp_{F(\min(S))}$ to the cube $X'\colon \alpha(S') \to
  \Sp_{F(\min(S'))}$ given on vertices by $$X'(U) = L_{U \cap
    [\min(S'), \min(S))} X(U \cap [\min(S),n]).$$

  Notice that if $\min(S') < \min(S)$, the formula shows $X'(U)$ is
  $L_{F(\min(S'))}$-local, as it should be. Also, when $\min(S') =
  \min(S)$, there is no localization at all and $X'$ is simply the
  restriction of $X$ to the face $\alpha(S') \subseteq \alpha(S)$
  (this inclusion does not hold when $\min(S) \neq \min(S')$).
\end{rmk}

This definition also becomes much clearer with an example:

\begin{ex}\mylabel{ex:catcube}
  Let $n=3$. The diagram $\cG$ looks like:

  \[\xymatrix{& \Spd{E}{} \ar[rr] \ar'[d][dd] \ar[ld] & & \Spd{F(3)}{\{\underline{3}\}} \ar[dd] \ar[ld] \\
    \Spd{F(2)}{\{\underline{2}, \underline{23}\}} \ar[rr] \ar[dd] & & \Spd{F(2)}{\{\underline{23}\}} \ar[dd] \\
    & \Spd{F(1)}{\{\underline{1}, \underline{12}, \underline{13}, \underline{123}\}} \ar'[r][rr] \ar[ld] & & \Spd{F(1)}{\{\underline{13}, \underline{123}\}} \ar[ld] \\
    \Spd{F(1)}{\{\underline{12},\underline{123}\}} \ar[rr] & & \Spd{F(1)}{\{\underline{123}\}}}\]
  where we have used shorthand for the elements of the various
  $\alpha(S)$: $\underline{13}$ denotes the set $\{1,3\}$, for
  example.
  
  Though $\cG(\varnothing)$ is not defined, we have put $\Sp_{E}$ in
  that corner, since \myref{decomposition} will show that this
  produces a Cartesian cube for any $E$ which is Bousfield equivalent
  to $F(1) \oplus F(2) \oplus F(3)$.
 
  A square in $\Spd{F(1)}{\{\underline{1}, \underline{12},
    \underline{13}, \underline{123}\}}$ should be though of as being
  \[\xymatrix{L_{F(1)} X \ar[r] \ar[d] & L_{F(1)}L_{F(3)} X \ar[d] \\
    L_{F(1)}L_{F(2)} X \ar[r] & L_{F(1)}L_{F(2)}L_{F(3)} X}\]
  for some spectrum $X$, and the two functors out of
  $\Spd{F(1)}{\{\underline{1}, \underline{12}, \underline{13},
    \underline{123}\}}$ are projection to the faces
  \[\xymatrix{L_{F(1)}L_{F(2)}X \ar[r] & L_{F(1)}L_{F(2)}L_{F(3)}X}\]
  and
  \[\xymatrix{L_{F(1)}L_{F(3)}X \ar[r] & L_{F(1)}L_{F(2)}L_{F(3)}X}.\]

  The vertical functor $\Spd{F(3)}{\{\underline{3}\}} \to
  \Spd{F(1)}{\{\underline{13}, \underline{123}\}}$ sends a
  $F(3)$-local spectrum $X$ to the morphism $L_{F(1)} X \to L_{F(1)}
  L_{F(2)} X$ you get by applying $L_{F(1)}$ to the localization map
  $X \to L_{F(2)} X$.
\end{ex}

Our decomposition of the category of $\bigoplus_{i=1}^nF(i)$-local objects can now be stated as follows.

\begin{thm}\mylabel{decomposition}
If $E$ is any spectrum Bousfield equivalent to $F(1) \oplus \ldots \oplus F(n)$, then there is an equivalence of stable presentable $\infty$-categories
\[\xymatrix{\Spd{E}{} \ar[r]^-{\sim} & \lim_{S \ne \varnothing} \cG(S)}\]
where $\cG$ is given by \myref{decompcons}.
\end{thm}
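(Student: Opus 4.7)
The plan is to combine the description of $\Sp_E$ as the category of partial cubes $\cC_T$ from \myref{cubecat} with an inductive computation of $\lim_{S \ne \varnothing} \cG(S)$ via \myref{pcubelim}, so that the inductive presentation of $\cC_T$ in \myref{cubecatcons} matches the resulting pullback.

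I would proceed by induction on $n = |T|$. The case $n=1$ is immediate, since both sides reduce to $\Sp_{F(1)}$. For the inductive step, set $T' = T \setminus \{1\}$ and apply \myref{pcubelim} with $t=1$ to express
\[
  \lim_{S \in \cP\NE(T)} \cG(S) \;\simeq\; \cG(\{1\}) \;\times_{\lim_{S \in \cP\NE(T')} \cG(\{1\} \cup S)}\; \lim_{S \in \cP\NE(T')} \cG(S).
\]
I then identify the three corners. First, $\cG(\{1\}) = \Fun(\alpha(\{1\}), \Sp_{F(1)}) \simeq \Fun(\cP(T'), \Sp_{F(1)})$ is the category of $F(1)$-local $(n-1)$-cubes, via $U \mapsto U \setminus \{1\}$. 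Second, for $S \subseteq S' \subseteq T'$, both $\alpha(S)$ and the structure map $\cG(S) \to \cG(S')$ of \myref{decompcons} involve only indices in $T'$, so the restricted partial cube of categories is precisely the one produced by \myref{decompcons} from the collection $\{F(2), \ldots, F(n)\}$; the inductive hypothesis then gives $\lim_{S \in \cP\NE(T')} \cG(S) \simeq \cC_{T'}$.

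For the third corner, every $S \subseteq S' \in \cP\NE(T')$ satisfies $\min(\{1\} \cup S) = \min(\{1\} \cup S') = 1$, so the set $\beta(\{1\} \cup S, \{1\} \cup S')$ is a singleton and \myref{decompcons} reduces the structure map to pure restriction along $\alpha(\{1\} \cup S') \hookrightarrow \alpha(\{1\} \cup S)$. Since $\bigcup_{S \in \cP\NE(T')} \alpha(\{1\} \cup S) = \alpha(\{1\}) \setminus \{\{1\}\} \cong \cP\NE(T')$, this identifies $\lim_{S \in \cP\NE(T')} \cG(\{1\} \cup S) \simeq \Fun(\cP\NE(T'), \Sp_{F(1)})$. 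Under these identifications the bottom edge of the pullback becomes restriction $\Fun(\cP(T'), \Sp_{F(1)}) \to \Fun(\cP\NE(T'), \Sp_{F(1)})$, while the right edge becomes pointwise $L_{F(1)}$-localization $\cC_{T'} \to \Fun(\cP\NE(T'), \Sp_{F(1)})$; the latter is computed by picking, for each vertex $U$, the parameter $S = U \setminus \{1\}$, so that the only surviving localization index in $U \cap [1, \min(S))$ is $1$ itself.

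The resulting pullback is precisely the category of triples $(X_1, \cG', \phi)$ consisting of an $F(1)$-local $(n-1)$-cube $X_1$, a partial cube $\cG' \in \cC_{T'}$, and an equivalence $\phi\colon X_1|_{\cP\NE(T')} \simeq L_{F(1)} \cG'$, which is the inductive presentation of $\cC_T$ from \myref{cubecatcons}; composing with the equivalence $\cC_T \simeq \Sp_E$ of \myref{cubecat} closes the induction. The main obstacle is the identification of the third corner of the pullback as a single functor category and the verification that both pullback maps acquire their expected form under this identification; both points reduce to unwinding \myref{decompcons} for the edges $\{1\} \subseteq \{1\} \cup S$ and $S \subseteq \{1\} \cup S$ respectively, and require tracking how the $\beta$-factor and $\theta$-isomorphism behave in the two cases where the minimum is preserved versus decreased.
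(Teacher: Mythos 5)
Your strategy is the paper's strategy: reduce to $\cC_T$ via \myref{cubecat}, induct on $n$, peel off $t=1$ with \myref{pcubelim}, and identify the three remaining corners exactly as you do (your combinatorial identifications of $\cG(\{1\})$, of the third corner as $\Fun(\cP\NE(T'),\Sp_{F(1)})$, and of the two edges as restriction and pointwise $L_{F(1)}$-localization all match the paper and are correct).

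The one genuine gap is the final step, which is where the real content lies. You assert that ``the resulting pullback is precisely the category of triples $(X_1,\cG',\phi)$'' and that this is the inductive presentation of $\cC_T$. Describing the objects of a homotopy pullback in $\mathrm{Cat}_\infty$ as such triples does not by itself identify that pullback with the full subcategory $\cC_T \subseteq \Fun(\cP\NE(T),\Sp)$ of Construction~\ref{cubecatcons}; one must actually produce the comparison functor and prove it is an equivalence of $\infty$-categories. The paper flags exactly this point (``This is intuitively clear\ldots More formally\ldots'') and handles it by checking that the two horizontal functors $p\colon \cC_T \to \cC_{\{2,\ldots,n\}}$ and $q\colon \Spd{F(1)}{\alpha(\underline{1})} \to \Spd{F(1)}{\alpha(\underline{1})\setminus\{\underline{1}\}}$ are coCartesian fibrations and that $f$ preserves coCartesian edges, so that by \cite[2.4.4.4]{htt} the square is Cartesian if and only if $f$ induces equivalences on fibers over each $\cY \in \cC_{\{2,\ldots,n\}}$. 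That fiberwise check is then done explicitly for $n=2$ and deduced in general from a ladder of fiber sequences using \myref{cubecat} and the inductive hypothesis applied to the pair $(F(1), F(2)\oplus\cdots\oplus F(n))$. You should supply some argument of this kind (or an alternative, e.g.\ exhibiting one of the legs as a categorical fibration and carefully matching the strict pullback with $\cC_T$); as written, the decisive step is asserted rather than proved.
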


\begin{proof}
For any $S \in T$, observe that there is a natural functor 
\[\xymatrix{\Phi_S\colon \cC_T \ar[r] & \Spd{F(\min(S))}{\alpha(S)},}\]
sending a partial $T$-cube $\cX$ to the cube given by restriction,
$\Phi_S(\cX) = \cX|_{\alpha(S)}$. The collection of these functors
induces a natural functor
\[\xymatrix{\Phi\colon \cC_T \ar[r] & \lim_{S \ne \varnothing} \cG(S)}\]
and it suffices to show that this is an equivalence by \myref{cubecat}.

We will now argue by induction on $n$, the case $n=1$ being trivial.
\myref{pcubelim} applied with $t=1$ shows that there is a pullback diagram
\[\xymatrix{\lim_{S \ne \varnothing} \cG(S) \ar[r] \ar[d] & \lim_{S \in \alpha(\underline{1}) \setminus \{\underline{1}\}} \cG(S\setminus \underline{1}) \ar[d] \\
\Spd{F(1)}{\alpha(\underline{1})} \ar[r] & \lim_{S \in \alpha(\underline{1})\setminus \{\underline{1}\}}\cG(S)}\]
(where again $\underline{1} = \{1\}$) and the inductive hypothesis
gives $\lim_{S \in \alpha(\underline{1}) \setminus \{\underline{1}\}}
\cG(S\setminus \underline{1}) = \cC_{\{2,\ldots,n\}}$. Also,
since all the edges in the bottom face of $\cG$ are 
restrictions:
\begin{align*}
\lim_{S \in \alpha(\underline{1})\setminus \{\underline{1}\}}\cG(S) & = \lim_{S \in \alpha(\underline{1})\setminus \{\underline{1}\}} \Spd{F(1)}{\alpha(S)} \\
& = \Spd{F(1)}{\colim_{S \in \alpha(\underline{1})\setminus \{\underline{1}\}} \alpha(S)} \\
& = \Spd{F(1)}{\alpha(\underline{1})\setminus \{\underline{1}\}},
\end{align*}
so it suffices to show that the following commutative square is a pullback:
\[\xymatrix{\cC_T \ar[r]^-p \ar[d]_-f & \cC_{\{2,\ldots,n\}} \ar[d]^{L_{F(1)}} \\
\Spd{F(1)}{\alpha(\underline{1})} \ar[r]_-q & \Spd{F(1)}{\alpha(\underline{1}) \setminus \{\underline{1}\}}.}\]

This is intuitively clear: objects of the pullback can be
described by giving a diagram $\cY \in \cC_{\{2,\ldots,n\}}$, a
diagram $\cX \in \Spd{F(1)}{\alpha(\underline{1})}$, and an
equivalence $\cX|_{\alpha(\underline{1})\setminus\{\underline{1}\}}
\to L_{F(1)} \cY$. That data clearly assembles to make a diagram in
$\cZ \in \cC_T$ with top (partial) face $\cZ|_{\cP(\{2,\ldots,n\})} =
\cY$ and bottom face $\cZ|_{\alpha(\underline{1})} = \cX$.

More formally, first note that it is easy to check that the horizontal arrows $p$ and $q$ in the above diagram are coCartesian fibrations and that the left vertical map $f$ preserves coCartesian morphisms. Therefore, we can make use of \cite[2.4.4.4]{htt} to reduce the claim to checking that, for every $\cY \in \cC_{\{2,\ldots,n\}}$, the fiber over $\cY$ in the horizontal direction are  equivalent via $f$, i.e., 
\[\xymatrix{(\cC_T)_{\cY} \ar[r]^-{\sim}_-{f} & (\Spd{F(1)}{\alpha(\underline{1})})_{L_{F(1)}\cY}.}\]

Let us first consider the case $n=2$. The fiber over $\cY = Y \in \Spd{F(2)}{}$ is given by the full subcategory of $\cC_{\{1,2\}}$ on object of shape 
\[\xymatrix{& Y \ar[d] \\
X \ar[r] & L_{F(1)}Y,}\]
which are thus determined by the bottom morphism $X \to L_{F(1)}Y$. This category is then easily seen to be equivalent to the fiber $(\Spd{F(1)}{\alpha(\underline{1})})_{L_{F(1)}Y}$, hence the claim holds for $n=2$. Now we can explain how to deduce the general case from here. We have a commutative diagram of fiber sequences:
\[\xymatrix{
(\cC_T)_{\cY}  \ar[r] \ar@/_3.5pc/@{-->}[ddd]_{\sim} \ar[d]^{\sim} & \cC_T \ar[r] \ar[d]^{\sim} \ar@/_3.5pc/@{-->}[ddd]_f &  \cC_{\{2,\ldots,n\}} \ar[d]_{\lim}^{\sim} \ar@/^3.5pc/@{-->}[ddd]^{L_{F(1)}} \\
(\Spd{E}{})_{\lim\cY} \ar[r] \ar[d]^{\sim} &  \Spd{E}{} \ar[r] \ar[d] &  \Spd{F(2) \oplus \cdots \oplus F(n)}{} \ar[d]_{L_{F(1)}} \\
(\Spd{F(1)}{\cP(\underline{1})})_{\lim L_{F(1)}\cY} \ar[r] & \Spd{F(1)}{\cP(\underline{1})} \ar[r]^{\mathrm{ev}_{\underline{1}}} &  \Spd{F(1)}{}\\
(\Spd{F(1)}{\alpha(\underline{1})})_{L_{F(1)}\cY} \ar[r] \ar[u]_{\sim} & \Spd{F(1)}{\alpha(\underline{1})} \ar[r] \ar[u] & \Spd{F(1)}{\alpha(\underline{1}) \setminus \{\underline{1}\}}. \ar[u]^{\lim}}\]
where the top right square is a pullback by \myref{cubecat}, the middle right one is by the induction hypothesis applied to the pair $(F(1), F(2) \oplus \cdots \oplus F(n))$, and the the bottom right one is by construction. The claim follows. 
\end{proof}

\bibliographystyle{alpha}
\bibliography{bibliography}

\end{document}